\newcommand{\GG}{\mathbb{G}}
\newcommand{\PP}{\mathbb{P}}
\newcommand{\QQ}{\mathbb{Q}}
\newcommand{\Aut}{\mathrm{Aut}}
\newcommand{\Ob}{\mathrm{Ob}}
\newcommand{\GL}{\mathrm{GL}}
\newcommand{\PGL}{\mathrm{PGL}}
\newcommand{\Gal}{\mathrm{Gal}}
\newcommand{\Br}{\mathrm{Br}}
\newcommand{\kbar}{\overline{k}}
\newcommand{\sC}{\mathcal{C}}
\newcommand{\sD}{\mathcal{D}}
\DeclareFontFamily{U}{wncy}{}
\DeclareFontShape{U}{wncy}{m}{n}{%
   <5>wncyr5%
   <6>wncyr6%
   <7>wncyr7%
   <8>wncyr8%
   <9>wncyr9%
   <10>wncyr10%
   <11>wncyr10%
   <12>wncyr6%
   <14>wncyr7%
   <17>wncyr8%
   <20>wncyr10%
   <25>wncyr10}{}
\DeclareSymbolFont{mcy}{U}{wncy}{m}{n}
\DeclareMathAlphabet{\cyr}{U}{wncy}{m}{n}
\DeclareMathSymbol{\Sha}{\mathord}{mcy}{"58}
\newtheorem{theorem}{Theorem}[section]
\newtheorem{lemma}[theorem]{Lemma}
\newtheorem{prop}[theorem]{Proposition}
\theoremstyle{definition}
\newtheorem{definition}[theorem]{Definition}
\newtheorem{question}[theorem]{Question}
\newtheorem{remark}[theorem]{Remark}
\numberwithin{equation}{section}
\begin{document}
\title{Visualizing elements of Sha[3] in genus 2 jacobians}
\author{Nils Bruin \and Sander R. Dahmen}
\thanks{Research of both authors supported by NSERC}
\address{Department of Mathematics,
         Simon Fraser University,
         Burnaby, BC,
         Canada}
\email{nbruin@sfu.ca}
\email{sdahmen@irmacs.sfu.ca}

\subjclass[2000]{Primary 11G30; Secondary 14H40.}
\date{January 28,  2010}
\keywords{Elliptic curves, Visibility, Shafarevich-Tate groups, Weil pairing}
\maketitle
\begin{abstract}
Mazur proved that any element $\xi$ of order three in the
Shafa\-revich-Tate group of an elliptic curve $E$ over a number
field $k$ can be made visible in an abelian surface $A$ in the sense
that $\xi$ lies in the kernel of the natural homomorphism between
the cohomology groups $H^1(\Gal(\overline{k}/k),E) \to
H^1(\Gal(\overline{k}/k),A)$. However, the abelian surface in
Mazur's construction is almost never a jacobian of a genus 2
curve. In this paper we show that any element of order three in
the Shafarevich-Tate group of an elliptic curve over a number
field can be visualized in the jacobians of a genus $2$ curve.
Moreover, we describe how to get explicit models of the genus $2$
curves involved.
\end{abstract}


\section{Introduction}

Let $E$ be an elliptic curve over a field $k$ with separable
closure $\kbar$. We write
$H^1(k,E[3]):=H^1(\Gal(\kbar/k),E[3](\kbar))$ for the first galois
cohomology group taking values in the $3$-torsion of $E$ (the
notation $H^i(k,A)$ is used similarly for other group schemes
$A/k$ later in this paper). We are primarily concerned with the
question which $\delta\in H^1(k,E[3])$ are \emph{visible} in the
jacobian of a genus $2$ curve. Mazur defines \emph{visibility} in
the following way. Let $0\to E \to A \to B\to 0$ be a short exact
sequence of abelian varieties over $k$. By taking galois
cohomology, we obtain the exact sequence
\begin{equation}\label{eq:def-visibility}
\xymatrix{A(k) \ar[r] & B(k) \ar[r] & H^1(k,E) \ar[r]^{\phi} & H^1(k,A)}.
\end{equation}
Elements of the kernel of $\phi$ are said to be \emph{visible} in
$A$. Mazur chose this term because a model of the principal
homogeneous space corresponding to an element $\xi \in H^1(k,E)$
that is visible in $A$ can be obtained as a fiber of $A$ over a
point in $B(k)$ (this can readily be seen from
(\ref{eq:def-visibility})).
By extension, we say that $\delta\in H^1(k,E[n])$ is visible in
$A$ if the image of $\delta$ under the natural homomorphism
$H^1(k,E[n])\to H^1(k,E)$ is visible in $A$.

Let us restrict to the case that $k$ is a number field for the
rest of this section. Inspired by some surprising experimental
data \cite{cremaz:vis}, Mazur \cite{mazur:sha3} proved, that for
any element $\xi$ in the Shafarevich-Tate group $\Sha(E/k)$ of
order three, there exists an abelian variety $A$ over $k$ such
that $\xi$ is visible in $A$. The abelian variety that Mazur
constructs is almost never principally polarizable over $\kbar$
and hence is almost never a jacobian of a genus $2$ curve. In the
present paper, we show that any element from $\Sha(E/k)[3]$ is in
fact visible in the jacobian of a genus $2$ curve. Moreover, we
describe how to get an explicit model of such a genus $2$ curve.

\section{Torsors and theta groups}
\label{sec:tortheta}

Throughout this section let $n>1$ be an integer, let $k$ be a
perfect field of characteristic not dividing $n$ and let $E$
denote an elliptic curve over $k$. In \cite{CFOSS:descent_alg},
many equivalent interpretations are given for the group
$H^1(k,E[n])$. For our purposes, we need two classes of objects.
The first is most closely related with descent in general and our
question in particular. We consider $E$-torsors under
$E[n](\kbar)$  and, following \cite{CFOSS:descent_alg}, call them
$n$-coverings.
\begin{definition}
An $n$-covering $\pi:C\to E$ of an elliptic curve $E$ is an
unramified covering over $k$ that is galois and irreducible over
$\kbar$, with $\Aut_{\kbar}(C/E)\simeq E[n](\kbar)$. Two
$n$-coverings $\pi_1: C_1 \to E$, $\pi_2: C_2 \to E$ are called
isomorphic if there exists a $k$-morphism $\phi:C_1 \to C_2$ such
that $\pi_1 = \pi_2 \circ \phi$.
\end{definition}
Over $\kbar$, all $n$-coverings are isomorphic to the trivial
$n$-covering, the multiplication-by-$n$ map $[n]: E\to E$.
\begin{prop}[\cite{CFOSS:descent_alg}*{Proposition~1.14}]
The $k$-isomorphism classes  of $n$-coverings of $E$ are
classified by $H^1(k,E[n])$.
\end{prop}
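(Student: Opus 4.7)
The plan is to recognize the proposition as an instance of the general principle that $k$-forms of a given $\kbar$-object are classified by the Galois cohomology of the $\kbar$-automorphism group. Here the reference object is the trivial $n$-covering $[n]:E\to E$, whose automorphism group as an $n$-covering is exactly $E[n](\kbar)$, acting by translations. Since by hypothesis every $n$-covering becomes isomorphic to $[n]:E\to E$ after base change to $\kbar$, the general descent formalism immediately suggests that the $k$-isomorphism classes form a pointed set in natural bijection with $H^1(k,E[n])$. I would carry out this idea explicitly in both directions.

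First I would build the map from $n$-coverings to cohomology. Given an $n$-covering $\pi:C\to E$, choose a $\kbar$-isomorphism $\phi:C_{\kbar}\to E_{\kbar}$ over $E_{\kbar}$. For each $\sigma\in\Gal(\kbar/k)$, the composition $c(\sigma):=\phi\circ(\act{\sigma}{\phi})^{-1}$ is an $E_{\kbar}$-automorphism of the trivial covering, hence an element of $E[n](\kbar)$. A direct check shows $\sigma\mapsto c(\sigma)$ is a $1$-cocycle, and that replacing $\phi$ by another choice $\phi'=t\circ\phi$ for some $t\in E[n](\kbar)$ alters $c$ by a coboundary. Finally, if $\pi_1\simeq\pi_2$ over $k$ via $\psi$, then composing $\phi_1$ with $\psi^{-1}$ yields a trivialization of $C_2$ producing the same cocycle, so the class $[c]\in H^1(k,E[n])$ depends only on the $k$-isomorphism class of $\pi$.

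Next I would construct the inverse by twisting. Given a cocycle $c$, define a scheme $C^c$ by descending $E_{\kbar}$ along the twisted Galois action $\sigma\cdot_c x=c(\sigma)+\act{\sigma}{x}$; the cocycle condition is exactly what ensures this is a group action, and since $c$ takes values in $E[n]\subset E$, the map $[n]:E_{\kbar}\to E_{\kbar}$ is equivariant for the twisted action on the source and the standard action on the target. Effectivity of Galois descent for the quasi-projective scheme $E_{\kbar}$ over $E$ then produces a $k$-scheme $C^c$ together with a $k$-morphism $\pi^c:C^c\to E$, which after base change to $\kbar$ recovers $[n]:E\to E$ and is therefore an $n$-covering. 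Composing the two constructions in either order returns the original object up to canonical isomorphism, and cohomologous cocycles yield $k$-isomorphic coverings by construction.

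The step most likely to require care is the descent in the second half: one must check that the twisted action on $E_{\kbar}$ is by $k$-morphisms of an \emph{effective} descent datum, not merely that an abstract cocycle exists. This follows because $[n]:E\to E$ is affine over $E$, so Galois descent applies and produces a genuine $k$-scheme rather than just a stack-theoretic object; moreover one should verify the output satisfies the geometric conditions in the definition of $n$-covering (unramified, geometrically irreducible, with geometric automorphism group $E[n]$), all of which transfer from the trivial covering because they are preserved by twisting. Once descent and these compatibilities are in hand, the bijection is a formality and the pointed-set structures (trivial covering $\leftrightarrow$ trivial class) match tautologically.
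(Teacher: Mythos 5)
Your proposal is correct and matches the intended proof: the paper gives no argument of its own, citing \cite{CFOSS:descent_alg}*{Proposition~1.14}, and the proof there is exactly the twisting/descent formalism you describe — $n$-coverings are $k$-forms of the trivial covering $[n]:E\to E$, whose automorphism group over $E_{\kbar}$ is $E[n](\kbar)$ acting by translations, so the classes are classified by $H^1(k,E[n])$. The points you flag are indeed the only ones needing care (effectivity of descent, which holds since $[n]$ is finite hence affine and the cocycle is continuous so the twist is defined over a finite extension, and the fact stated in the paper just before the proposition that every $n$-covering is trivialized over $\kbar$), and your handling of them is adequate.
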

For $\delta \in H^1(k,E[n])$ we denote by $C_{\delta}$ the curve
in the covering $C_{\delta} \to E$ corresponding to $\delta$. We
remark that $\delta \in H^1(k,E[n])$ has trivial image in
$H^1(k,E)$ if and only if $C_{\delta}$ has a $k$-rational point.

We write $O$ for the identity on $E$. The complete linear system
$|n\cdot O|$ determines a morphism
$E\to \PP^{n-1}$, where the translation action of $E[n]$ extends
to a linear action on $\PP^{n-1}$. This gives a projective
representation $E[n]\to\PGL_n$. The lift of this representation to
$\GL_n$ gives rise to a  group $\Theta_E$, which fits in the
following diagram.
\begin{equation}\label{eq:theta-GLn}
\xymatrix{
1\ar[r]&
  \GG_m\ar[r]^{\alpha_E}\ar@{=}[d]&
  \Theta_E\ar[r]^{\beta_E}\ar[d]&
  E[n]\ar[r]\ar[d]&
  1\\
1\ar[r]&
  \GG_m\ar[r]&
  \GL_n\ar[r]&
  \PGL_n\ar[r]&
  1
}
\end{equation}
The group $E[n](\kbar)$ carries additional structure. It also has
the Weil pairing $e_E$, which is a non-degenerate alternating
galois covariant pairing taking values in the $n$-th roots of
unity
$$e_E: E[n](\kbar)\times E[n](\kbar)\to \mu_n(\kbar).$$
The commutator of $\Theta_E$ corresponds to the Weil pairing,
meaning that for $x,y\in \Theta_E$ we have
$$xyx^{-1}y^{-1}=\alpha_E(e_E(\beta_E(x),\beta_E(y))).$$

\begin{definition}\label{def:theta group}
A theta group for $E[n]$ is a central extension of group schemes
$$1\to \GG_m\stackrel{\alpha}{\to}\Theta\stackrel{\beta}{\to} E[n]\to 1$$
such that the Weil-pairing on $E[n]$ corresponds to the
commutator, i.e. for $x,y\in\Theta$ we have
$$xyx^{-1}y^{-1}=\alpha(e_E(\beta(x),\beta(y))).$$
Two theta groups
\[1 \to \GG_m \to \Theta_i \to E[n] \to 1, \qquad i=1,2\]
are called isomorphic if there exists a group scheme isomorphism
$\phi: \Theta_1 \to \Theta_2$ over $k$ making the following
diagram commutative.
\begin{equation*}
\xymatrix{ 1\ar[r]&
  \GG_m\ar[r]\ar@{=}[d]&
  \Theta_1\ar[r] \ar[d]^{\phi}&
  E[n]\ar[r]\ar@{=}[d]&
  1\\
1\ar[r]&
  \GG_m\ar[r]&
  \Theta_2\ar[r]&
  E[n]\ar[r]&
  1
}
\end{equation*}

\end{definition}
Over $\kbar$, all theta-groups are isomorphic to $\Theta_E$ as
central extensions; see \cite{CFOSS:descent_alg}*{Lemma~1.30}.

\begin{prop}[\cite{CFOSS:descent_alg}*{Proposition~1.31}]\label{prop:H1theta}
Let $E[n]$ be the $n$-torsion subscheme of an elliptic curve $E$
over a field $k$, equipped with its Weil pairing. The isomorphism
classes of theta-groups for $E[n]$ over $k$ are classified by
$H^1(k,E[n])$.
\end{prop}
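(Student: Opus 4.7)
The plan is to apply standard Galois descent. By \cite{CFOSS:descent_alg}*{Lemma~1.30} (cited just above the statement), every theta group for $E[n]$ becomes isomorphic to $\Theta_E$ over $\kbar$ as a central extension, so the pointed set of $k$-isomorphism classes of theta groups is in natural bijection with $H^1(k,\underline{\Aut}(\Theta_E))$, where $\underline{\Aut}(\Theta_E)$ denotes the $k$-group scheme of self-isomorphisms in the category of Definition~\ref{def:theta group}. The proposition thus reduces to identifying $\underline{\Aut}(\Theta_E)$ with $E[n]$ as group schemes over $k$.

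To carry out this identification, fix a $k$-algebra $R$ and $\phi\in\underline{\Aut}(\Theta_E)(R)$. By definition $\phi$ restricts to the identity on $\alpha_E(\GG_m)$ and induces the identity on the quotient $E[n]$, so for every $x\in\Theta_E(R)$ the ratio $\phi(x)x^{-1}$ lies in $\ker(\beta_E)=\alpha_E(\GG_m)$ and depends only on $\beta_E(x)$. Writing $\phi(x)=\alpha_E(\chi(\beta_E(x)))\cdot x$, one checks that $\phi$ being a group homomorphism is equivalent to $\chi\colon E[n]\to\GG_m$ being a group homomorphism, and conversely any such $\chi$ produces an automorphism of $\Theta_E$ as a central extension; the commutator relation (and hence compatibility with the Weil pairing) is preserved automatically, since it is determined by the outer terms of the central extension. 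Because $E[n]$ is killed by $n$, the image of $\chi$ lies in $\mu_n$, giving a functorial identification $\underline{\Aut}(\Theta_E)\simeq \underline{\mathrm{Hom}}(E[n],\mu_n)$.

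To conclude, I would invoke the non-degeneracy of the Weil pairing: since $e_E$ is a perfect, alternating, Galois-equivariant pairing, the map $P\mapsto e_E(P,-)$ is a canonical $k$-isomorphism $E[n]\xrightarrow{\sim}\underline{\mathrm{Hom}}(E[n],\mu_n)$. Composing yields $\underline{\Aut}(\Theta_E)\simeq E[n]$ as $k$-group schemes, so $H^1(k,\underline{\Aut}(\Theta_E))\simeq H^1(k,E[n])$, as desired.

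I do not see a major obstacle here; the argument is really just bookkeeping once one commits to the descent framework. The point to be careful about is that automorphisms of $\Theta_E$ viewed merely as an abstract group scheme are far more numerous than the ones relevant here, and it is essential to restrict to the category of Definition~\ref{def:theta group}, i.e.\ to those automorphisms that commute with both $\alpha_E$ and $\beta_E$. The only genuinely nontrivial input is the self-duality of $E[n]$ furnished by the Weil pairing, which is precisely what turns $\underline{\mathrm{Hom}}(E[n],\mu_n)$ back into $E[n]$ and thereby supplies the cohomological identification claimed in the statement.
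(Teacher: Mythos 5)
The paper offers no proof of this proposition: it is quoted verbatim from \cite{CFOSS:descent_alg}*{Proposition~1.31}, and the proof given there is precisely the twisting argument you describe. Your write-up is correct and matches that source's approach — Lemma~1.30 of \cite{CFOSS:descent_alg} provides the $\kbar$-rigidity, Galois descent reduces the classification to $H^1(k,\underline{\Aut}(\Theta_E))$, and the computation $\underline{\Aut}(\Theta_E)\simeq\underline{\mathrm{Hom}}(E[n],\mu_n)\simeq E[n]$ via the non-degenerate Weil pairing finishes the identification.
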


The theta group associated to $\delta\in H^1(k,E[n])$ may allow
for a matrix representation $\Theta\to \GL_n$ that fits in a
diagram like \eqref{eq:theta-GLn}. This is measured by the
\emph{obstruction map} $\Ob$ introduced in \cite{oneil:PIobs} and
\cite{CFOSS:descent_alg}. This map can be obtained by taking
non-abelian galois cohomology of the defining sequence of
$\Theta_E$:
$$\cdots\longrightarrow H^1(k,\Theta_E)\longrightarrow H^1(k,E[n])\stackrel{\Ob}{\longrightarrow} H^2(k,\GG_m)=\Br(k)\longrightarrow\cdots .$$
Note that, except in some trivial cases, $\Ob$ is \emph{not} a
group homomorphism. The map $\Ob$ also has an interpretation in
terms of $n$-coverings. Let $C \to E$ be an $n$-covering
associated to $\delta \in H^1(k,E[n])$. We have that
$\Ob(\delta)=0$ if and only if $C$ admits a model $C\to\PP^{n-1}$
with $\Aut_{\kbar}(C/E)=E[3](\kbar)$ acting linearly, in which
case $C$ is $\kbar$-isomorphic to $E$ as a curve and the covering
$C\to E$ is simply a translation composed with
multiplication-by-$n$.

\begin{remark}\label{rem:brauerob}
Note that if $k$ is a number field, then any element in $\Br(k)$
that restricts to the trivial element in $\Br(k_v)$ in all
completions $k_v$ of $k$, is trivial itself. It follows that $\Ob$
is trivial on the $n$-Selmer group $S^{(n)}(E/k)$.
 \end{remark}

\section{Visibility in surfaces}

Let $E_1$ be an elliptic curve over a perfect field $k$ of
characteristic distinct from $3$. In what follows, we will
consider $\delta\in H^1(k,E_1[3])$ with $\Ob(\delta)=0$. A
possible way of constructing an abelian surface $A$ such that
$\delta$ is visible in $A$ starts by taking a suitable elliptic curve
$E_2/k$ together with a $k$-group scheme isomorphism $\lambda:
E_1[3]\to E_2[3]$. Let $\Delta\subset E_1\times E_2$ be the graph
of $-\lambda$ so that
$$\Delta(\kbar)=\{(P,-\lambda(P):P \in E_1[3](\kbar)\}.$$
Let $A:=(E_1\times E_2)/\Delta$ and write $\phi: E_1\times E_2\to
A$ for the corresponding isogeny. Since $\Delta\subset
E_1[3]\times E_2[3]$, we have another isogeny $\phi':A\to
E_1\times E_2$ such that $\phi'\circ \phi=3$. We write $p^*$ for
the composition $E_1\to (E_1\times E_2) \stackrel{\phi}{\to} A$
and $p_*$ for the composition $A\stackrel{\phi'}{\to}(E_1\times
E_2)\to E_1$ and $q^*, q_*$ for the corresponding morphisms
concerning $E_2$. It is straightforward to verify that $p^*,q^*$
are embeddings, that $\phi=p^*+q^*$ (where the projections are
understood and we note that the $+$ sign here corresponds to the
$-$ sign in the definition of $\Delta$) and that $\phi'=p_*\times
q_*$.

We combine the galois cohomology of the short exact sequences
\begin{align*}
&0\to E_1 \stackrel{p^*}{\to} A \stackrel{q_*}{\to} E_2 \to 0,\\
&0\to E_2 \stackrel{q^*}{\to} A \stackrel{p_*}{\to} E_1 \to 0, \text{ and}\\
&0\to E_i[3]\to E_i \stackrel{3}{\to}E_i\to 0\text{ for $i=1,2$}
\end{align*}
to obtain the big (symmetric) commutative diagram with exact rows and
columns
$$\xymatrix{
&&E_2(k)\ar[r]^{q^*}\ar[d]^{3}
  &A(k)\ar[d]^{q_*}\\
&&E_2(k)\ar@{=}[r]\ar[d]^{\alpha}
  &E_2(k)\ar[d]\\
E_1(k)\ar[r]^{3}\ar[d]_{p^*}
  &E_1(k)\ar[r]\ar@{=}[d]
  &H^1(k,\Delta)\ar[r]\ar[d]
  &H^1(k,E_1)\ar[d]\\
A(k)\ar[r]^{p_*}
  &E_1(k)\ar[r]
  &H^1(k,E_2)\ar[r]
  &H^1(k,A)
}$$ where we note that $H^1(k,\Delta)\simeq H^1(k,E_1[3])\simeq
H^1(k,E_2[3])$. We see that $\delta$ is visible in $A$ precisely
if $\delta\in H^1(k,E_1[3])=H^1(k,\Delta)$ lies in the image of
$\alpha$, i.e., if the curve $C_{\lambda(\delta)}$ corresponding
to $\lambda(\delta)\in H^1(k,E_2[3])$ has a rational point. We
summarize these observations, which are due to Mazur.

\begin{lemma}\label{lemma:mazurvis}
Let $E_1$ be an elliptic curve over a perfect field $k$ of
characteristic distinct from $3$ and let $\delta\in H^1(k,E[3])$
with $\Ob(\delta)=0$. Suppose that there exists an elliptic curve
$E_2/k$ and a $k$-group scheme isomorphism $\lambda: E_1[3]\to
E_2[3]$ such that the curve $C_{\lambda(\delta)}$ corresponding to
$\lambda(\delta)$ has a $k$-rational point. Then $\delta$ is
visible in the abelian surface $(E_1 \times E_2)/\Delta$ where
$\Delta$ denotes the graph of $-\lambda$.
\end{lemma}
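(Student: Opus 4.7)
My plan is to carry out a short diagram chase in the commutative diagram with exact rows and columns assembled in the preceding discussion. By definition, $\delta$ is visible in $A$ if and only if its image $\delta_E$ in $H^1(k,E_1)$, obtained through the natural map $H^1(k,E_1[3])\to H^1(k,E_1)$, is killed by the map $p^*: H^1(k,E_1)\to H^1(k,A)$ sitting in the bottom-right column of the diagram.

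The first step is to translate the hypothesis that $C_{\lambda(\delta)}$ has a $k$-rational point. As recorded just after the definition of $n$-coverings, an element of $H^1(k,E_2[3])$ represents a $3$-covering with a $k$-point precisely when its image in $H^1(k,E_2)$ is trivial. Under the identifications $H^1(k,\Delta)\simeq H^1(k,E_1[3])\simeq H^1(k,E_2[3])$ induced by the two projections from $\Delta$ (a sign enters in the second identification because $\Delta$ is the graph of $-\lambda$, but this is harmless since the property of mapping to zero is preserved by negation), the class $\delta$ corresponds to $\lambda(\delta)$. The hypothesis thus becomes that the image of $\delta$ under the vertical map $H^1(k,\Delta)\to H^1(k,E_2)$ in the third column of the diagram vanishes.

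The main step is then a one-square chase in the bottom-right commutative square
$$\xymatrix{
H^1(k,\Delta)\ar[r]\ar[d] & H^1(k,E_1)\ar[d]^{p^*}\\
H^1(k,E_2)\ar[r] & H^1(k,A).
}$$
Tracing $\delta$ across the top and then down yields $p^*(\delta_E)$, whereas tracing down first produces $0$ by the reformulated hypothesis, so $p^*(\delta_E)=0$ as well. This is precisely what visibility of $\delta$ in $A$ means.

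I do not foresee any serious obstacle: once the commutative diagram with exact rows and columns is in hand, the proof reduces to a single square chase combined with the rational-point criterion for $n$-coverings. The only subtlety worth noting is the sign introduced by defining $\Delta$ as the graph of $-\lambda$ rather than $\lambda$, but this has no bearing on any of the vanishing statements used above.
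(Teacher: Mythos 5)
Your proof is correct and follows essentially the same route as the paper: the paper's own argument is precisely the diagram chase in the big commutative diagram preceding the lemma, translating the rational point on $C_{\lambda(\delta)}$ into vanishing of the image of $\delta$ in $H^1(k,E_2)$. The only (immaterial) difference is that you conclude via commutativity of the bottom-right square, whereas the paper phrases the same chase as ``$\delta$ lies in the image of $\alpha: E_2(k)\to H^1(k,\Delta)$'' and then uses exactness of the last column; your handling of the sign coming from the graph of $-\lambda$ is also fine, since it does not affect vanishing.
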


Mazur also observed, in the case of a number field $k$, that if
$\delta\in S^{(3)}(E/k)$, then $C_\delta$ admits a plane cubic
model. Furthermore, there is a pencil of cubics through the $9$
flexes of $C_\delta$, and each non-singular member corresponds to
a $3$-covering $C_t\to E_t$, where $E_t[3]\simeq E[3]$ and $C_t\to
E_t$ represents $\delta$. It is therefore easy to find a $t$ such
that $C_t$ has a rational point; simply pick a rational point and
solve for $t$. To refine the construction, one can ask

\begin{question} Can one make $\delta\in H^1(k,E[3])$ visible in the jacobian of a genus $2$ curve?
\end{question}

Note that $E_1\times E_2$ is principally polarized via the product
polarization. This gives rise to a Weil pairing on $(E_1\times
E_2)[3]$, corresponding to the product pairing. If $A$ is a
jacobian, then $A$ must be principally polarized over $\kbar$. One way this
could happen is if the isogeny $p^*+q^*:E_1\times E_2\to A$ gives
rise to a principal polarization. This would be the case if the
kernel $\Delta$ is a maximal isotropic subgroup of $E_1[3]\times
E_2[3]$ with respect to the product pairing. That means that
$\lambda: E_1[3]\to E_2[3]$ must be an \emph{anti}-isometry, i.e.
for all $P,Q\in E_1[3]$ we must have
$$e_{E_2}(\lambda(P),\lambda(Q))=e_{E_1}(P,Q)^{-1}.$$
Note that the original cubic $C$ is a member of the pencil that
Mazur constructs, so in his construction $\lambda$ is actually an
\emph{isometry}, i.e. it preserves the Weil-pairing. Below we
consider a pencil of cubics that leads to an anti-isometry
$\lambda$.

\section{Anti-isometric pencils}\label{sec:antipencils}

Let $k$ be a perfect field of characteristic distinct from $2,3$.
Following \cite{fisher:hessian}, we associate to a ternary cubic
form $F \in k[x,y,z]$ three more ternary cubic forms. Namely, the
Hessian of $F$
\[
H(F):= -\frac{1}{2} \left| \begin{array}{ccc}
\frac{\partial{F}^2}{\partial{x}\partial{x}} &
\frac{\partial{F}^2}{\partial{x}\partial{y}} &
\frac{\partial{F}^2}{\partial{x}\partial{z}}\\
\frac{\partial{F}^2}{\partial{y}\partial{x}} &
\frac{\partial{F}^2}{\partial{y}\partial{y}} &
\frac{\partial{F}^2}{\partial{y}\partial{z}}\\
\frac{\partial{F}^2}{\partial{z}\partial{x}} &
\frac{\partial{F}^2}{\partial{z}\partial{y}} &
\frac{\partial{F}^2}{\partial{z}\partial{z}}
\end{array} \right|,
\]
the Caylean of $F$
\[
P(F):= -\frac{1}{x y z}\left| \begin{array}{ccc}
\frac{\partial{F}}{\partial{x}}(0,z,-y) & \frac{\partial{F}}{\partial{y}}(0,z,-y) & \frac{\partial{F}}{\partial{z}}(0,z,-y)\\
\frac{\partial{F}}{\partial{x}}(-z,0,x) & \frac{\partial{F}}{\partial{y}}(-z,0,x) & \frac{\partial{F}}{\partial{z}}(-z,0,x)\\
\frac{\partial{F}}{\partial{x}}(y,-x,0) &
\frac{\partial{F}}{\partial{y}}(y,-x,0) &
\frac{\partial{F}}{\partial{z}}(y,-x,0)
\end{array} \right|
\]
and a ternary cubic form denoted $Q(F)$, for which we refer to
\cite{fisher:hessian}*{Section~11.2}. For most cases one can take $Q(F)$ to be $H(P(F))$ or $P(H(F))$, but there are some exceptional cases where $P(F),Q(F)$ span an appropriate pencil and $P(F), H(P(F))$ do not.
The left action of $\GL_3$ on $k^3$ induces a right action of $\GL_3$ on ternary
cubic forms (or, more generally, on $k[x,y,z]$). For a ternary
cubic form $F$ and an $M \in \GL_3$ we denote this action simply by
$F \circ M$. The significance of the three associated ternary
cubic forms lies in the fact that $H(F)$ depends covariantly on
$F$ (of weight $2$) and $P(F)$ and $Q(F)$ depend contravariantly
on $F$ (of weights $4$ and $6$ respectively). This means that for
every ternary cubic form $F$ and every $M \in \GL_3$ we have, with
$d:=\det M$ that
\begin{eqnarray*}
H(F \circ M) & = & d^2 H(F) \circ M\\
P(F \circ M) & = & d^4 P(F) \circ M^{-T}\\
Q(F \circ M) & = & d^6 Q(F) \circ M^{-T},
\end{eqnarray*}
where $M^{-T}$ denotes the inverse transpose of $M$.

Now consider a smooth cubic curve $C$ in $\PP^2$ given by the zero
locus of a ternary cubic form $F$. Then $C$ has exactly $9$
different flex points $\Phi$, which all lie on the (not
necessarily smooth) curve given by $H(F)=0$. The smoothness of $C$
guarantees that $F$ and $H(F)$ will be linearly independent over
$k$. Hence $\Phi$ can be described as the intersection $F=H(F)=0$. We
call $\Phi$ the \emph{flex scheme} of $C$. At least one of $P(F)$
and $Q(F)$ turns out to be nonsingular (still assuming that $C$ is
nonsingular) and the intersection $P(F)=Q(F)=0$ equals the flex
points $\Phi^*$ of the nonsingular cubics among $P(F)$ and $Q(F)$
(if, say, $P(F)$ is nonsingular, then $\Phi^*$ can of course also
be written as $P(F)=H(P(F))=0$).

We can consider the pencil of cubics through $\Phi$,
explicitly given by
\[C_{(s:t)}: sF(x,y,z)+tH(F)(x,y,z)=0.\]
Classical invariant theory tells us the following. This pencil has exactly $4$ singular members and all other members
have flex scheme equal to $\Phi$. Conversely, any nonsingular
cubic with flex scheme $\Phi$ occurs in this pencil. Furthermore,
both $P(sF+tH(F))$ and $Q(sF+tH(F))$ are linear combinations of
$P(F)$ and $Q(F)$. This shows that the flex scheme $\Phi^*$ is
independent of the choice of $C$ through $\Phi$ and only depends
on $\Phi$. We call $\Phi^*$ the dual flex scheme of $\Phi$ and we will
justify this name below.

\begin{remark}
In the discussion above it was convenient to consider just one
projective plane $\PP^2$. A more canonical way would be to
consider a projective plane $\PP^2$ with coordinates $x,y,z$
and the dual projective plane, denoted $(\PP^2)^*$, whose
coordinates $u,v,w$ are related to those of $\PP^2$ by $ux+vy+wz=0$.
Now let $C$ be a smooth cubic curve in
$\PP^2$ given by the zero locus of the ternary cubic form
$F(x,y,z)$ with flex scheme $\Phi$. The $9$ tangent lines
through $\Phi$ determine $9$ points in $(\PP^2)^*$. Generically,
these $9$ points in $(\PP^2)^*$ will not be the flex points of a
smooth cubic curve, hence generically there will a unique cubic
curve going through these points. This curve in $(\PP^2)^*$ is
exactly given by the zero locus of the Caylean, i.e.
$P(F)(u,v,w)=0$; see also \cite{salmon}*{pp.151,190--191}.
Moreover, if the characteristic of $k$ is zero, then it turns out
that this cubic curve is nonsingular if and only if the
$j$-invariant of $C$ is nonzero.
\end{remark}

As a simple, but important example we take $F:=x^3+y^3+z^3$. Then
we compute
\[H(F)=-108xyz, \quad P(F)=-54xyz, \quad Q(F)=324(x^3+y^3+z^3).\]
Now define $\Phi_0$ to be the flex scheme of $F=0$, i.e.
\begin{equation}\label{eq:flex scheme 0}
\Phi_0:=\{[x:y:z] \in \PP^2: x^3+y^3+z^3=xyz=0\}.
\end{equation}
Then we see that the flex scheme given by $P(F)=Q(F)=0$ (which is
the flex scheme of $Q(F)=0$) equals $\Phi_0$, i.e.
\[\Phi_0^*=\Phi_0.\]

Geometrically all flex schemes are linear transformations of each
other. In particular, for any flex scheme $\Phi$ there exists an
$M \in \GL_3(\kbar)$ such that $\Phi=M\Phi_0$. The contravariance
of $P$ and $Q$ implies that the assignment $\Phi \mapsto \Phi^*$
has the contravariance property that for any flex scheme $\Phi$
and $M \in \GL_3$
\begin{equation}\label{eq:contravar flex scheme}
(M \Phi)^*=M^{-T} \Phi^*.
\end{equation}
We also note that this implies that the assignment $\Phi \mapsto
\Phi^{**}:=(\Phi^*)^*$ is covariant in the sense that for any flex
scheme $\Phi$ and $M \in \GL_3$ we have $(M \Phi)^{**}=M
\Phi^{**}.$ Writing $\Phi=M\Phi_0$ and using
$(\Phi_0)^{**}=\Phi_0^*=\Phi_0$ we now get
\[\Phi^{**}=(M \Phi_0)^{**}=M \Phi_0^{**}=M \Phi_0=\Phi.\]
This justifies calling $\Phi^*$ the \emph{dual} flex scheme of
$\Phi$.


To any flex scheme $\Phi$ we associate a group $\Theta(\Phi)
\subset \GL_3$ as follows. Choose a nonsingular cubic curve $C$
through $\Phi$ and let $E$ be its jacobian. After identifying $E$
and $C$ as curves over $\kbar$, we get an action of $E[3]$ on $C$,
which extends to a linear action on $\PP^2$. This determines an
embedding $\chi:E[3] \to \PGL_3$. Obviously, the image
$\chi(E[3])$ only depends on $\Phi$. We define $\Theta(\Phi)$ to
be the inverse image of $\chi(E[3])$ in $\GL_3$. Actually
$\Theta(\Phi)$ can be defined just in terms of $\Phi$, without
choosing $C$, since it turns out that $\chi(E[3])$ consists
exactly of the linear transformations that preserve $\Phi$. (One
way of quickly finding these linear transformations explicitly is
by using the fact that, for any two distinct points of $\Phi$, the
line through these two points intersects $\Phi$ in a unique third
point.) The construction gives rise to the theta group
\[1 \to \GG_m \to \Theta(\Phi) \to E[3] \to 1.\]
Note that the isomorphism class of this theta group may still
depend on the choice of identification of $C$ with $E$. This
corresponds to the choice of an isomorphism between
$\Theta(\Phi)/\GG_m $ and $E[3]$. If $\Phi$ is defined over $k$,
then $E[3]$ and $\Theta(\Phi)$ are also defined over $k$ and the
element in $H^1(k,E[3])$ corresponding to this theta group is the
same as the element corresponding to the $3$-covering $C \to
C/E[3] \simeq E$ for any nonsingular cubic curve $C$ through
$\Phi$. The construction also shows that for any $M \in \GL_3$ we
have
\begin{equation}\label{eq:theta change of co}
\Theta(M\Phi)=M\Theta(\Phi)M^{-1}.
\end{equation}

\begin{prop}\label{prop:anti-theta}
Let $\Phi_1 \subset \PP^2$ be a flex scheme and let
$\Phi_2:=\Phi_1^*$ be the dual flex scheme. For $i=1,2$ let $C_i$
be a smooth plane cubic with flex scheme $\Phi_i$, denote its
jacobian by $E_i$ and consider an induced theta group
\begin{equation}\label{eq:theta of flex}
\xymatrix{ 1\ar[r]&
  \GG_m\ar[r]^{\alpha_i}&
  \Theta(\Phi_i)\ar[r]^{\beta_i} &
  E_i[3]\ar[r]& 1 }.
\end{equation}
Then the outer automorphism $(-T):\GL_3 \to \GL_3$ given by $M
\mapsto M^{-T}$, yields an
isomorphism $\Theta(\Phi_1) \to \Theta(\Phi_2)$.
There exists an anti-isometry $\lambda: E_1[3] \to
E_2[3]$ making the following diagram commutative.
\begin{equation}\label{eq:theta anti-theta}
\xymatrix{ 1\ar[r]&
  \GG_m\ar[r]^{\alpha_1}\ar[d]_{x\mapsto x^{-1}}&
  \Theta(\Phi_1)\ar[r]^{\beta_1}\ar[d]^{(-T)}&
  E_1[3]\ar[r]\ar[d]^{\lambda}&
  1\\
1\ar[r]&
  \GG_m\ar[r]^{\alpha_2}&
  \Theta(\Phi_2)\ar[r]^{\beta_2}&
  E_2[3]\ar[r]&
  1
}
\end{equation}
In particular, let $\delta_i \in H^1(k,E_i[3])$ correspond to the
theta group (\ref{eq:theta of flex}). Then under the isomorphism
$H^1(k,E_1[3]) \simeq H^1(k,E_2[3])$ induced by $\lambda$, the cocycle
$\delta_1$ maps to $\delta_2$.
\end{prop}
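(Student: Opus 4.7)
The plan is to build the map on theta groups from the contravariance of the dual flex scheme, read off the anti-isometry directly from the commutator identity, and then deduce the cocycle statement from Proposition~\ref{prop:H1theta}.

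First I would verify that $(-T)$ carries $\Theta(\Phi_1)$ into $\Theta(\Phi_2)$. Using the characterization $\Theta(\Phi_i) = \{M \in \GL_3 : M\Phi_i = \Phi_i\}$ together with the contravariance identity \eqref{eq:contravar flex scheme}, the condition $M\Phi_1=\Phi_1$ implies $M^{-T}\Phi_2 = M^{-T}\Phi_1^* = (M\Phi_1)^* = \Phi_1^* = \Phi_2$, so $M^{-T} \in \Theta(\Phi_2)$. Since $(-T)$ is an involutive group homomorphism on $\GL_3$ (note $(AB)^{-T} = A^{-T}B^{-T}$), its restriction gives a $k$-isomorphism $\Theta(\Phi_1) \to \Theta(\Phi_2)$. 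Restricting further to scalar matrices, $(cI)^{-T} = c^{-1}I$, so the left square of \eqref{eq:theta anti-theta} commutes with inversion on $\GG_m$. Passing to quotients now defines $\lambda: E_1[3] \to E_2[3]$, and the right square commutes by construction.

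The anti-isometry property of $\lambda$ then falls out of the commutator identity defining a theta group. For $x, y \in \Theta(\Phi_1)$ we have in $\Theta(\Phi_1)$ the identity
\[
xyx^{-1}y^{-1} = \alpha_1\bigl(e_{E_1}(\beta_1(x), \beta_1(y))\bigr).
\]
Applying the homomorphism $(-T)$ and using commutativity of the left square, the left-hand side becomes the commutator $[x^{-T}, y^{-T}]$ inside $\Theta(\Phi_2)$ and the right-hand side becomes $\alpha_2\bigl(e_{E_1}(\beta_1(x), \beta_1(y))^{-1}\bigr)$. On the other hand, the commutator identity for $\Theta(\Phi_2)$ together with the right-square relation $\beta_2(x^{-T}) = \lambda(\beta_1(x))$ gives
\[
[x^{-T}, y^{-T}] = \alpha_2\bigl(e_{E_2}(\lambda\beta_1(x), \lambda\beta_1(y))\bigr).
\]
Injectivity of $\alpha_2$ and surjectivity of $\beta_1$ then yield $e_{E_2}(\lambda P, \lambda Q) = e_{E_1}(P, Q)^{-1}$ for all $P, Q \in E_1[3]$, which is exactly the anti-isometry condition.

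For the last assertion, the diagram \eqref{eq:theta anti-theta} itself exhibits the isomorphism of theta groups needed to match $\delta_1$ and $\delta_2$ under $\lambda$. Concretely, transport the theta group $\Theta(\Phi_1)$ for $E_1[3]$ to a central extension of $E_2[3]$ by replacing its quotient map with $\lambda \circ \beta_1$; because $\lambda$ is an anti-isometry, one must simultaneously precompose the $\GG_m$-inclusion with inversion in order to keep the commutator equal to the Weil pairing $e_{E_2}$, and this is precisely the data on the left-hand vertical map of \eqref{eq:theta anti-theta}. The diagram then says that this reparametrized $\Theta(\Phi_1)$ is $k$-isomorphic, as a theta group for $E_2[3]$, to $\Theta(\Phi_2)$, so by Proposition~\ref{prop:H1theta} both classify the same element of $H^1(k, E_2[3])$, namely $\delta_2$. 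The main technical care I expect to need is precisely here: keeping straight the bookkeeping between the cohomological classification and the theta-group classification, and in particular tracking the role of the inversion on $\GG_m$ that converts an anti-isometric push-forward into a genuine theta group. Everything else reduces to the contravariance formula already established for $\Phi \mapsto \Phi^*$.
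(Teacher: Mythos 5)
Your proposal is correct and, in substance, runs parallel to the paper's proof: both obtain $\lambda$ from the $k$-rational homomorphism $(-T)\colon\Theta(\Phi_1)\to\Theta(\Phi_2)$, and your derivation of the anti-isometry (apply $(-T)$ to the commutator identity, use the left square to turn $\alpha_1$ into $\alpha_2\circ\mathrm{inv}$, then use injectivity of $\alpha_2$) is exactly the paper's five-line computation read in the other direction. The one genuinely different ingredient is how you prove $\Theta(\Phi_1)^{-T}=\Theta(\Phi_2)$: you use the description of $\Theta(\Phi)$ as the full preimage in $\GL_3$ of the stabilizer of $\Phi$, so that $M\Phi_1=\Phi_1$ plus the contravariance formula (\ref{eq:contravar flex scheme}) gives $M^{-T}\Phi_2=\Phi_2$ in one line (with surjectivity from involutivity of $(-T)$ and $\Phi^{**}=\Phi$). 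The paper instead writes $\Phi=M\Phi_0$, verifies by explicit computation that $\Theta(\Phi_0)^{-T}=\Theta(\Phi_0)=\Theta(\Phi_0^*)$, and transports this using (\ref{eq:contravar flex scheme}) and (\ref{eq:theta change of co}). Your route is shorter, but it makes the unproved assertion of Section~\ref{sec:antipencils} --- that $\chi(E[3])$ consists exactly of the projective transformations preserving $\Phi$ --- carry the whole weight, whereas the paper's normalization to $\Phi_0$ only needs the conjugation formula (\ref{eq:theta change of co}) plus a concrete check at the Hesse configuration; both are legitimate within the paper's framework. For the final cocycle statement the paper says only that it is immediate, and your pushforward sketch is at the same level of rigour; be aware, though, that the ``bookkeeping'' you defer is not entirely empty: your argument shows the reparametrized $\Theta(\Phi_1)$ and $\Theta(\Phi_2)$ have the same class, but to conclude $\lambda_*\delta_1=\delta_2$ you still need that pushing a theta group forward along $\lambda$ (with central inversion) induces precisely $\lambda_*$ on classes, i.e.\ that this operation is compatible with the base points $\Theta_{E_1}$ and $\Theta_{E_2}$ of the two classifications in Proposition~\ref{prop:H1theta} --- the same point the paper leaves unspoken.
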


\begin{proof}
Once the isomorphism $\Theta(\Phi_1) \to \Theta(\Phi_2)$ given by $ M
\mapsto M^{-T}$ is established, the existence of an isomorphism
$\lambda: E_1[3] \to E_2[3]$ making the diagram (\ref{eq:theta
anti-theta}) commutative, follows immediately. That $\lambda$ must be an anti-isometry can readily be seen as
follows. Let $P,Q \in E_1[3]$ and choose $x,y \in \Theta(\Phi_1)$
such that $P=\beta_1(x)$ and $Q=\beta_1(y)$. Then
\begin{eqnarray*}
\alpha_2(e_{E_2}(\lambda(P),\lambda(Q))) & = &
\alpha_2(e_{E_2}(\beta_2(x^{-T}),\beta_2(y^{-T})))\\
 & = & x^{-T}y^{-T} x^T y^T\\
 & = & (xyx^{-1}y^{-1})^{-T}\\
 & = & \alpha_1(e_{E_1}(\beta_1(x),\beta_1(y)))^{-T}\\
 & = & \alpha_1(e_{E_1}(P,Q)^{-1}).
\end{eqnarray*}
The last statement of the proposition is also immediate, so we are
left with establishing $(-T):\Theta(\Phi_1) \stackrel{\sim}{\to}
\Theta(\Phi_2)$. It suffices to show that for a flex scheme $\Phi
\subset \PP^2$ we have $\Theta(\Phi)^{-T}=\Theta(\Phi^*)$. Write
$\Phi=M\Phi_0$ for some $M \in \GL_3$ with $\Phi_0$ given by
(\ref{eq:flex scheme 0}). Then a straightforward calculation shows
that $\Theta(\Phi_0)^{-T}=\Theta(\Phi_0)$. We also know that
$\Phi_0^*=\Phi_0$, so we get
$\Theta(\Phi_0)^{-T}=\Theta(\Phi_0^*)$. Together with
(\ref{eq:contravar flex scheme}) and (\ref{eq:theta change of co})
we finally obtain,
\begin{eqnarray*}
\Theta(\Phi)^{-T} & = & \Theta(M\Phi_0)^{-T}\\
 & = & M^{-T}\Theta(\Phi_0)^{-T} M^T\\
 & = & M^{-T} \Theta(\Phi_0^*)(M^{-T})^{-1}\\
 & = & \Theta(M^{-T}\Phi_0^*)\\
 & = & \Theta((M\Phi_0)^*)\\
 & = & \Theta(\Phi^*).
\end{eqnarray*}
\end{proof}

\begin{remark}
The construction above of the dual flex scheme $\Phi^*$ of a flex scheme
$\Phi$ involved choosing a smooth cubic going through
$\Phi$. Without using theta groups, it was not obvious from this
construction that the degree $9$ \'etale algebra $k(\Phi)$ is
isomorphic to $k(\Phi^*)$. However, there exists a nice explicit
geometric construction of the dual flex scheme that remedies these
shortcomings of the earlier construction. Given a flex scheme
$\Phi$, we proceed as follows. We label its $9$ points over
$\kbar$ with $P_1,\ldots,P_9$. There are $4$ sets of $3$ lines,
(corresponding to the $4$ singular member of the pencil of cubics
through $\phi$) containing these points. We label the line that
contains $P_i,P_j,P_k$ with $l_{\{i,j,k\}}$. One can label the
points such that the subscripts are
$$\begin{array}{c}
   \{1,2,3\}\\
   \{4,5,6\}\\
   \{7,8,9\}
\end{array},
\begin{array}{c}
   \{1,4,7\}\\
   \{2,5,8\}\\
   \{3,6,9\}
\end{array},
\begin{array}{c}
   \{1,5,9\}\\
   \{2,6,7\}\\
   \{3,4,8\}
\end{array},
\begin{array}{c}
   \{1,6,8\}\\
   \{2,4,9\}\\
   \{3,5,7\}
\end{array},
$$
Naturally, two different lines $l_{\{i_1,j_1,k_1\}},
l_{\{i_2,j_2,k_2\}}$ meet in a unique point. If for example
$i_1=i_2$, then the intersection point is $P_{i_1}$. If the two
sets $\{i_1,j_1,k_1\}$ $\{i_2,j_2,k_2\}$ are disjoint, then the
two lines meet in a point outside $\Phi$. We name this point
$L_{\{i_3,j_3,k_3\}}$, where
$\{i_1,j_1,k_1,i_2,j_2,k_2,i_3,j_3,k_3\}=\{1,\ldots,9\}$. As it
turns out, the four points that have $i$ in their label all lie on
a line $p_i$. It is also straightforward to check that the $p_i$
together with the $L_{\{i,j,k\}}$ form a configuration in
$(\PP^2)^*$ that is completely dual to the $P_i$ with the
$l_{\{i,j,k\}}$. The $p_i$ form the $\kbar$ points of a flex
scheme in $(\PP^2)^*$, which is justifiably a flex scheme $\Phi^*$
dual to $\Phi$, and its construction immediately implies the
contravariance property $(M\Phi)^*=M^{-T}\Phi^*$.

We can easily verify that the two constructions of $\Phi^*$
coincide for one flex scheme, for instance $\Phi_0$. The general
result then follows because any flex scheme can be expressed as
$M\Phi_0$ for some $M\in\GL_3(\kbar)$.

Since the action of $\Gal(\kbar/k)$ on $\{P_1,\ldots,P_9\}$ must
act via collinearity-preserving permutations, we see that if
$\sigma(P_i)=P_{\sigma(i)}$ then $\sigma(p_i)=p_{\sigma(i)}$.
Hence, we see that the $\kbar$-points of $\Phi$ and its dual have
the same Galois action and hence $k(\Phi)$ is isomorphic as a
$k$-algebra to $k(\Phi^*)$.

\end{remark}

\section{Recovering the genus $2$ curve}\label{sec:genus2}

Let $k$ be a field and let $E_1,E_2$ be two elliptic curves over $k$ with
an anti-isometry
$\lambda: E_1[3] \to E_2[3]$ and denote by
$\Delta$ the graph of $-\lambda$ as before. Recall that $E_1
\times E_2$ is principally polarized via the product polarization
and that the induced polarization on $A:=(E_1 \times E_2)/\Delta$ is
also principal in this case. It is a classical fact that if $A$ is
not geometrically isomorphic to a product of elliptic curves, then
$A$ (together with its principal polarization) is
isomorphic to the jacobian of a genus $2$ curve $C$. Let us assume
from now on that $E_1$ and $E_2$ are non-isogenous. In
\cite{frey-kani} it is shown that in this case $A$ is always
isomorphic over $k$ to the jacobian of a genus $2$ curve $C/k$.
This is enough to get our main theoretical result.
\begin{theorem}
Let $E$ be an elliptic curve over a number field $k$ and let $\xi
\in \Sha(E/k)[3]$. Then $\xi$ is visible in the jacobian of a
genus $2$ curve $C/k$.
\end{theorem}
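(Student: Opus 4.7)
The plan is to combine Lemma~\ref{lemma:mazurvis} with the anti-isometry construction from Proposition~\ref{prop:anti-theta} and the Frey-Kani realization theorem. First, since $\xi \in \Sha(E/k)[3]$ lies in the image of the $3$-Selmer group, I would lift it to a class $\delta_1 \in S^{(3)}(E/k) \subset H^1(k,E[3])$. By Remark~\ref{rem:brauerob}, $\Ob(\delta_1) = 0$, and by Mazur's observation recalled just before the question in Section~3, the $3$-covering $C_{\delta_1} \to E$ admits a model as a smooth plane cubic $C_1 \subset \PP^2_k$; let $\Phi_1$ be its flex scheme.

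Next I would form the dual flex scheme $\Phi_2 := \Phi_1^* \subset (\PP^2)^*_k$ from Section~\ref{sec:antipencils}. By Proposition~\ref{prop:anti-theta}, any smooth cubic $C_2$ through $\Phi_2$ determines $E_2 := \Jac(C_2)/k$, an anti-isometry $\lambda:E[3] \to E_2[3]$, and the identification of $\lambda(\delta_1)$ with the class in $H^1(k,E_2[3])$ of the $3$-covering $C_2\to E_2$. To force a $k$-rational point on $C_2$, I would pick any $p \in (\PP^2)^*(k)$ lying off $\Phi_2$ and off the four singular members of the pencil of cubics through $\Phi_2$, and take $C_2$ to be the unique pencil member through $p$. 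Then $\lambda(\delta_1)$ has trivial image in $H^1(k,E_2)$, so Lemma~\ref{lemma:mazurvis} shows that $\delta_1$ is visible in $A := (E \times E_2)/\Delta$, where $\Delta$ is the graph of $-\lambda$. Because $\lambda$ is an anti-isometry, $\Delta$ is maximal isotropic in $(E \times E_2)[3]$ under the product Weil pairing, so the product polarization descends to a principal polarization on $A$.

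Finally, I would invoke \cite{frey-kani} to conclude that $A$ is the jacobian of a genus~$2$ curve $C/k$, whence $\xi$, being the image of $\delta_1$ in $H^1(k,E)$, is visible in $\Jac(C)$. The main obstacle is that the cited Frey-Kani theorem requires $E$ and $E_2$ to be non-$k$-isogenous, so the choice of $p$ above is not entirely free: I must also arrange that the resulting $E_2$ is not $k$-isogenous to $E$. The $j$-invariant of the fibers of the pencil through $\Phi_2$ is non-constant in the pencil parameter, and over a number field the $k$-isogeny class of $E$ is finite, so only finitely many pencil parameters $t \in \PP^1(k)$ are excluded. Since $(\PP^2)^*(k)$ is infinite, such a $p$ exists; pinning down this genericity argument cleanly is the delicate step I expect to require the most care.
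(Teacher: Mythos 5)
Your proposal is correct and follows essentially the same route as the paper's proof: lift $\xi$ to $\delta\in S^{(3)}(E/k)$, use Remark~\ref{rem:brauerob} to get a plane cubic model, pass to the dual flex scheme and its pencil, choose a smooth member with a rational point so that Proposition~\ref{prop:anti-theta} and Lemma~\ref{lemma:mazurvis} give visibility in a principally polarized surface $(E\times E_2)/\Delta$, and invoke Frey--Kani after arranging $E,E_2$ non-isogenous. Your handling of the non-isogeny condition is actually more explicit than the paper's (which just calls it easy); the only tiny refinement needed is to appeal to Zariski density of $\PP^2(k)$ (rather than mere infinitude) to find a rational point avoiding the finitely many excluded cubic members.
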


\begin{proof}
Let $\delta\in S^{(3)}(E/k)$ be a cocycle representing $\xi$. By
Proposition~\ref{prop:H1theta}, there is a $3$-covering
$C_\delta\to E$ corresponding to $\delta$. According to
Remark~\ref{rem:brauerob}, we have that $\Ob(\delta)=0$ and hence
that $C_\delta\subset\PP^2$. Let $\Phi\subset\PP^2$ be its flex
scheme. The construction in Section~\ref{sec:antipencils} gives us
a pencil of cubics through $\Phi^*$, so we can easily pick a
non-singular one with a rational point. It follows from
Proposition~\ref{prop:anti-theta} that such a curve is of the form
$C_{\lambda(\delta)}$ for some elliptic curve $E_2$ and some
anti-isometry $\lambda: E[3]\to E_2[3]$.

This places us in the situation of Lemma~\ref{lemma:mazurvis}, so $\delta$ is visible in an abelian surface $A=(E\times E_2)/\Delta$. We have ensured that $\lambda$ is an anti-isometry, which implies that the surface is principally polarized. As long as we make sure that $E,E_2$ are non-isogenous (and this is easy given the freedom we have in choosing $C_\lambda(\delta)$)
it follows that $A$ is a jacobian.
\end{proof}

\begin{remark}
We could of course state a more general result about visibility of
elements $\delta \in H^1(k,E[3])$ with $\Ob(\delta)=0$ for an
elliptic curves $E$ over a perfect field $k$ of characteristic
distinct from $2$ or $3$. Note however that if $k$ is too small,
there might not be enough non-isogenous elliptic curves available.
The exclusion of fields of characteristic $3$ is a serious one,
the exclusion of non-perfect fields less so. Most of what we are
saying could be generalized to the non-perfect case, basically
because for an elliptic curve over any field of characteristic
distinct from $3$, the multiplication by $3$ map is separable. The
exclusion of fields of characteristic $2$ stems from the fact that
the necessary invariant theory in this case is not readily
available.
\end{remark}

We continue with the construction of the genus $2$ curve $C$.
Define the divisor $\Theta:=0_1 \times E_2+E_1 \times 0_2$ on $E_1
\times E_2$, which gives a principal polarization on $E_1 \times
E_2$. Next, consider the set $\sD$ of effective divisors on
$E_1 \times E_2$ over $\kbar$ which are linear equivalent to $3
\Theta$ and invariant under $\Delta$. Also consider the set
$\sC$ of effective divisors $C$ on $A$ over $\kbar$ whose
pull-back to $E_1 \times E_2$ are linear equivalent to $3 \Theta$
and which satisfy $(C \cdot C)=2$. Frey and Kani show that there
exist unique curves $D \in \sD$ and $C \in \sC$
defined over $k$ which are invariant under multiplication by $-1$.
Furthermore, because $E_1$ and $E_2$ are not isogenous, $D$ and
$C$ are irreducible smooth curves of genus $10$ and $2$
respectively and the natural map $D \to C$ is unramified of degree
$9$.

If $k$ is a
perfect field of characteristic distinct from $2$ or $3$, the
curves $D$ and $C$ can be explicitly constructed as follows. Embed
$E_1$ in $\PP^2$, given by, say $F(x,y,z)=0$, for a ternary cubic
$F/k$ (such an $F$ is readily obtained if $E_1$ is given by a
Weierstrass model). Express $E_2$ as $G:=s P(F)+tQ(F)=0$ for some
$s,t \in k$. This way, we obtain an embedding of $E_1 \times E_2$
in $\PP^2 \times \PP^2$ given by
\[F(x,y,z)=G(u,v,w)=0.\]
Moreover, by appealing to Proposition \ref{prop:anti-theta} we
obtain that the curve on this surface given by $xu+yv+zw=0$ must
be the curve $D$. The genus $2$ curve $C$ is the image of $D$ in $(E_1\times E_2)/\Delta$.
$$\xymatrix{
E_1\times E_2\ar[dd]_{[3]\times[3]}\ar[dr]\\
&(E_1\times E_2)/\Delta\ar[dl]\\
E_1\times E_2}$$
The map $[3]\times[3]$ is much more accessible, though. Also observe that
the hypersurface $\{xu+yv+zw=0\}\subset \PP^2\times\PP^2$ is only invariant under $\Delta\subset E_1[3]\times E_2[3]$. A little extra work shows that the subgroup of $E_1[3]\times E_2[3]$ under which $D$ is invariant is equal to $\Delta$. Hence, we can find a model of $C$ as a curve on $E_1\times E_2$ by computing $([3]\times[3])(D)$. This can easily be done via interpolation, as explained below by means of an example.

\section{Examples}

Following the first example in \cite{cremaz:vis}*{Table~1},
consider the elliptic curve $681b1$ (in Cremona's notation), given
by the minimal Weierstrass equation
\[E_1: y^2 + xy = x^3 + x^2 - 1154x - 15345.\]
It turns out that the plane cubic curve
\[C_1: x^3+5x^2y+5x^2z+2xy^2+xyz+xz^2+y^3-5y^2z+2yz^2+6z^3=0\]
defines an element $\xi$ (up to inverse) of order three in
$\Sha(E_1/\QQ)$. The contravariants of the cubic above defining
$C_1$, denoted $P_0, Q_0$, are given by
\begin{eqnarray*}
P_0 & = & -478x^3 + 2525x^2y + 916x^2z - 1127xy^2 + 29xyz \\
 & & -160xz^2 + 753y^3 - 1228y^2z + 260yz^2 + 301z^3,\\
Q_0 & = & -122314x^3 + 618551x^2y + 191092x^2z - 271157xy^2 - 7825xyz \\
 & & - 28120xz^2 + 184011y^3 - 264916y^2z + 55892yz^2 + 73663z^3.
\end{eqnarray*}
Now the curve
\[C_2: 55033P_0-235Q_0=0\]
has a rational point $[x:y:z]=[10:8:7]$ and its jacobian is the
elliptic curve $681c1$, given by the minimal Weierstrass equation
\[E_2: y^2 + y = x^3 - x^2 + 2.\]

To construct the corresponding genus two curve $C$ such that $\xi$
becomes visible in its jacobian we could now take the curve in
$C_1\times C_2\subset \PP^2\times\PP^2$ with coordinates
$([x:y:z],[u:v:w])$ given by the equation $xu+yv+zw=0$, and take
its image under $C_1\times C_2\to E_1\times E_2$, since this is a
twist of $[3]\times [3]: E_1\times E_2\to E_1\times E_2$ anyway.
We will follow Section~\ref{sec:genus2} more closely. Obviously,
$E_1$ is given by $F=0$ if we define
\[F:= y^2z + xyz -( x^3 + x^2z - 1154xz^2 - 15345z^3).\]
The contravariants of the ternary cubic $F$ are given by
\begin{eqnarray*}
P & = & -2308x^3 + 3462x^2y - 5x^2z - 275056xy^2 + 5xyz\\
 & & + 6xz^2 + 136951y^3 + 13853y^2z - 3yz^2,\\
Q & = & -725020x^3 + 1087530x^2y + 27721x^2z - 65861608xy^2 - 27721xyz\\
 & & - 30xz^2 + 32749549y^3 + 3217559y^2z + 15yz^2 + 24z^3.\\
\end{eqnarray*}
Write $j(s,t)$ for the $j$-invariant of the curve given by
$sP+tQ=0$. The $j$-invariant of $E_2$ equals $-4096/2043$ and the
equation $j(s,t)=-4096/2043$ has exactly one solution in
$\PP^1(\QQ)$, namely $[s:t]=[55033:-235]$ (compare with the
definition of $C_2$). This gives us a new model for $E_2$, namely
\[ E_2: 55033P-235Q=0.\]
We consider the surface $E_1 \times E_2$ embedded in $\PP^2 \times \PP^2$  as
\[F(x,y,z)=0, \quad 55033P(u,v,w)-235Q(u,v,w)=0.\]
Now $D$ is simply the curve on this surface given by
\[xu+yv+zw=0.\]
The image of $D$ under multiplication by $3$ on $E_1 \times E_2$
is the genus two curve $C$. Using the defining properties of $C$
from Section~\ref{sec:genus2} (such as the invariance under
multiplication by $-1$), we get that as a curve on $E_1 \times
E_2$ it must be of the form
\[axu+byv+czw+dxw+ezu=0\]
for some $a,b,c,d,e \in \QQ$.
Now we simply generate $4$ points on $C$ (over a number field), compute the image under multiplication by $3$ of these points and solve for $a,b,c,d,e$. If the dimension of the solution space is greater than $1$, we must of course add points (or take $4$ better ones) so that the solution space becomes $1-$dimensional.
This gives us our equation for $C$. By a linear change of the $u,v,w$ coordinates we can change the model for $E_2$ back to the original minimal Weierstrass model. Thus, the model for $E_1 \times E_2$ embedded in $\PP^2 \times \PP^2$ is
\[E_1: y^2z + xyz = x^3 + x^2z - 1154xz^2 - 15345z^3,\]
\[E_2: v^2w + vw^2 = u^3 - u^2w + 2w^3\]
and $C$ is the curve on this surface given by
\[4xu - 155zu + xv + 2yv - 40xw + yw + 1314zw=0.\]
Hyperelliptic models for $C$ are
\begin{eqnarray*}
Y^2 + (X + 1)Y & = & 3X^5 + 5X^4 + X^3 - 8X^2 - 5X+ 2\text{ or }\\
Y^2 & = & (3X-1)(X+1)(4X^3+4X^2-9).
\end{eqnarray*}

Next, consider the elliptic curve $2006e1$, given by the minimal
Weierstrass equation
\[E_1: y^2 + xy = x^3 + x^2 - 58293654x - 171333232940.\]
It turns out that the plane cubic curve
\[C_1: 20x^3 + 44x^2y + 21x^2z - 77xy^2 + 71xyz + 44xz^2 + 31y^3 + 3y^2z
+ 150yz^2 + z^3=0\] defines an element $\xi$ (up to inverse) of
order three in $\Sha(E_1/\QQ)$. In the sixth example in
\cite{cremaz:vis}*{Table~1} the elliptic curve $E_2$ which \lq
explains\rq\ $\Sha(E_1/\QQ)$ is $2006d1$. However, for this choice
of $E_2$, there only exists an isometry between $E_1[3]$ and
$E_2[3]$ and not an anti-isometry. The corresponding abelian
surface $(E_1 \times E_2)/\Delta$ visualizing $\xi$ will not be
the jacobian of a genus $2$ curve. If instead we take for $E_2$
the elliptic curve $6018c1$, then we do have an anti-isometry
between $E_1[3]$ and $E_2[3]$. Following the same route as in the
first example, we find that $\xi$ is visible in the jacobian of
the genus $2$ curve $C$ with hyperelliptic models
\begin{eqnarray*}
Y^2 + (X^2 + X)Y & = & -9675X^6 - 94041X^5 - 914X^4 + 1301674X^3 - 352310X^2\\ & & - 2071181X - 945269\text{ or }\\
Y^2 & = & 43(2X + 13)(18X^2 - 81X + 89)(25X^3 + 193X^2 + 224X +
76).
\end{eqnarray*}

\begin{bibdiv}
\begin{biblist}

\bib{CFOSS:descent_alg}{article}{
  author={Cremona, J. E.},
  author={Fisher, T. A.},
  author={O'Neil, C.},
  author={Simon, D.},
  author={Stoll, M.},
  title={Explicit $n$-descent on elliptic curves. I. Algebra},
  journal={J. Reine Angew. Math.},
  volume={615},
  date={2008},
  pages={121--155},
  issn={0075-4102},
}

\bib{cremaz:vis}{article}{
  author={Cremona, John E.},
  author={Mazur, Barry},
  title={Visualizing elements in the Shafarevich-Tate group},
  journal={Experiment. Math.},
  volume={9},
  date={2000},
  number={1},
  pages={13--28},
  issn={1058-6458},
}

\bib{mazur:sha3}{article}{
  author={Mazur, B.},
  title={Visualizing elements of order three in the Shafarevich-Tate group},
  note={Sir Michael Atiyah: a great mathematician of the twentieth century},
  journal={Asian J. Math.},
  volume={3},
  date={1999},
  number={1},
  pages={221--232},
  issn={1093-6106},
}

\bib{oneil:PIobs}{article}{
  author={O'Neil, Catherine},
  title={The period-index obstruction for elliptic curves},
  journal={J. Number Theory},
  volume={95},
  date={2002},
  number={2},
  pages={329--339},
  issn={0022-314X},
}

\bib{fisher:hessian}{article}{
  author={Fisher, Tom},
  title={The Hessian of a genus one curve},
  journal={arXiv: math/0610403},
  date={2006},
  eprint={http://lanl.arxiv.org/abs/math/0610403},
}

\bib{salmon}{book}{
  author={Salmon, George},
  title={A treatise on the higher plane curves},
  edition={Third edition},
  publisher={Hodges, Foster, and Figgis, Grafton Street},
  place={Dublin},
  date={1879},
}

\bib{frey-kani}{article}{
  author={Frey, Gerhard},
  author={Kani, Ernst},
  title={Curves of genus {$2$} covering elliptic curves and an arithmetical application},
  booktitle={Arithmetic algebraic geometry ({T}exel, 1989)},
  series={Progr. Math.},
  volume={89},
  pages={153--176},
  publisher={Birkh\"auser Boston},
  address={Boston, MA},
  year={1991},
}

\end{biblist}
\end{bibdiv}

\end{document}